\documentclass[a4paper, reqno, 11pt]{article}
\usepackage{fullpage}
\usepackage{color}
\makeatletter

\@addtoreset{equation}{section}
\makeatother
\usepackage[singlespacing]{setspace}
\usepackage{xcolor}
\usepackage{here}
\usepackage{graphicx}
\usepackage{float}
\usepackage{mathtools}
\usepackage[T1]{fontenc}
\usepackage{amsmath}
\usepackage{amssymb}
\usepackage{latexsym}
\usepackage{amsthm}
\usepackage{hyperref}
\usepackage{tikz-cd}

\newtheorem{Thm}{Theorem}[section]

\newtheorem{Prop}[Thm]{Proposition}

\theoremstyle{definition}

\newtheorem{Rem}[Thm]{Remark}
\newtheorem{Exa}[Thm]{Example}
 
\def\GL{\mathrm{GL}}
\def\SL{\mathrm{SL}}
\def\SO{\mathrm{SO}}
\def\O{\mathrm{O}}
\def\Sym{\mathrm{Sym}}
\def\sym{\mathrm{sym}}
\def\SPD{\mathrm{SPD}}
\def\dx{{\mathrm{d}x}}

\def\FR{{\mathrm{FR}}}
\def\Ker{{\mathrm{Ker}}}

\begin{document}

\title{Group invariance of $f$-divergences and the Fisher--Rao distance}

\author{
Frank Nielsen\\
Sony Computer Science Laboratories Inc.\\
E-mail: {\tt Frank.Nielsen@acm.org} 
\and 
Kazuki Okamura\\
Department of Mathematics, Faculty of Science, Shizuoka University\\
E-mail:  {\tt okamura.kazuki@shizuoka.ac.jp}
}
\date{\today}

\maketitle

\begin{abstract}
Many statistical models have natural symmetries described by a group action. 
We study how such symmetries affect the comparison of two distributions. 
We work with a transformation model in which a group acts on both the sample space and the parameter space, and the densities transform with a multiplier. 
Under this assumption, we show that every $f$-divergence is invariant under the group action. As a consequence, an invariant divergence depends only on a maximal invariant of the pair of parameters. 
When the action on the parameter space is transitive, this maximal invariant is given by a double coset. 
We apply this result to multidimensional location-scale families, and we show that the same reduction applies to the Fisher--Rao distance.
\end{abstract}

\section{Introduction}

\subsection{Group actions and invariant divergences}
Many statistical models have natural symmetries. 
For example, in a location model, a common translation of the sample space and the parameter space should not change the intrinsic comparison between two distributions. 
In a scale model, a stretch in both the sample space and the parameter space should also not change such a comparison. 
These symmetries are described by group actions. 
If a statistical divergence (not necessarily metric) or a statistical distance is compatible with the model, then it should be invariant under the corresponding group action.

The purpose of this paper is to study this idea for $f$-divergences~\cite{Csiszar-1963} and for the Fisher--Rao distance~\cite{Rao-1945}. 
The class of $f$-divergences includes many important divergences, such as the Kullback--Leibler divergence, the Hellinger divergence, the total variation distance and the chi-square divergence. 
This class was introduced and studied independently in different forms by Ali and Silvey \cite{AliSilvey-1966}, Csisz\'{a}r \cite{Csiszar-1963}, and Morimoto \cite{Morimoto-1963}. 
Because of this generality, a result for all $f$-divergences gives a common explanation for many particular formulas.

We consider a statistical model which is equivariant under a group action~\cite{keener2010theoretical}. 
More precisely, the group acts on the sample space and also on the parameter space, and the densities transform with the correct multiplier. 
Under this assumption, every $f$-divergence between two members of the model is invariant. 
If both parameters are transformed by the same group element, then the value of the divergence does not change. 
Consequently, the divergence depends only on the relative position of the two parameters.

Invariance is useful, but it is not enough by itself. 
It tells us only that the divergence has the same value for all pairs of parameters that lie on the same orbit under the diagonal group action. 
In order to turn this observation into a formula, we need a quantity that labels these orbits. 
This is the role of a {\it maximal invariant}. 
It keeps exactly the information that remains after the group symmetry is removed, and it loses no information that can affect an invariant divergence. 
Therefore, once a maximal invariant for pairs of parameters is known, every invariant divergence must be expressible as a function of it. 
This viewpoint is classical in statistics; Eaton \cite{Eaton1989}  and Wijsman \cite{Wijsman1990} give a detailed account of invariant measures on groups and their statistical use.

The use of maximal invariants is especially important when we compare two distributions. 
A divergence is defined for a pair of parameters, not for a single parameter. 
Therefore, the relevant symmetry is the simultaneous action of the group on both parameters. 
Even if the action of the group on the parameter space $\Theta$ itself is transitive, the diagonal action on the product space $\Theta \times \Theta$ is usually not transitive. 
Some relative information remains, such as relative location, relative scale, or more generally the relative position of the two parameters. 
An invariant divergence necessarily depends only on the maximal invariant of the pair. In information geometry, a divergence is also interpreted geometrically as a contrast or yoke function~\cite{eguchi1992geometry}, i.e., a scalar function on the product manifold.

This observation gives a useful reduction principle: instead of treating a divergence as a function of all parameters, we may first remove the group symmetry and express it through a smaller set of invariant variables. 
This not only simplifies the computation, but also tells us which quantities can appear in any invariant formula. 

\subsection{Contributions}

For a transitive parameter space, the orbit space of pairs has a natural {\it double-coset} form. 
If the parameter space $\Theta$ is identified with a homogeneous space $G/H$, where $G$ is a group and $H$ is the stabilizer of a fixed base point, then a maximal invariant for pairs is given by the double coset $H g_1^{-1} g_2 H$ for $g_1, g_2 \in G$ (Theorem~\ref{thm:double-coset}). 
This gives a general and geometric way to express invariant divergences. 
The double coset represents the relative position of two points in the homogeneous space. 
This double coset is well-defined: it does not depend on the particular choices of $g_1$ and $g_2$ used to represent $\theta_1$ and $\theta_2$. 
Moreover, two pairs of parameters have the same double coset if and only if they are related by the diagonal action of $G$. 
Therefore, the double coset gives a complete description of the relative position of two parameters.

We apply this idea to location-scale families (see~\cite{NO2024} for a restricted study on either location or scale families). 
In these models, the relevant information is the relative location and relative scale of two distributions. 
In order to make this description canonical in higher dimensions, we use a quotient parametrization that ignores orthogonal rotations of the scale factor.

In this setting, the maximal invariant is described by the singular values of the relative scale and by the block norms of the transformed relative location, where the blocks are determined by the multiplicities of the singular values (Section~\ref{sec:lsmodel}).  
Thus, every invariant $f$-divergence necessarily depends only on these quantities. The same argument applies to the Fisher--Rao distance: affine invariance reduces the distance between two distributions to the distance from a canonical base distribution (Proposition~\ref{prop:FR-quotient}). 
This reduction does not give a general closed-form formula, but it tells us exactly which invariant variables are relevant. 

\subsection{Outline}
This paper is organized as follows. 
First, we prove the group invariance of $f$-divergences in a general transformation model (Theorem~\ref{thm:inv-div}). 
We then show that such divergences are functions of maximal invariants of the diagonal action on parameter pairs (Theorem~\ref{thm:abstract}). 
In the transitive case, we describe the maximal invariant by a double-coset space (Theorem~\ref{thm:double-coset}). 
Next, we construct explicit maximal invariants for multidimensional location-scale models in \S\ref{sec:lsmodel}. 
In \S\ref{sec:bdfy}, we consider Bregman divergences which are canonical dually flat divergences in information geometry~\cite{IG-2016} and describe the invariance in terms of dual group actions for the centered scale family.
Finally, we discuss the Fisher--Rao distance~\cite{Rao-1945} in \S\ref{sec:FRdist} and show that the same invariant reduction applies to this Riemannian  
geodesic distance (Proposition~\ref{prop:FR-quotient}).
 
\section{Expression of \texorpdfstring{$f$}{f}-divergences by maximal invariants}

We follow the framework of \cite[Section 3]{Eaton1989}.  
Let $G$ be a topological group. 
Let $(X, \mathcal{B}, \lambda)$ be a $\sigma$-finite measure space. 
Assume that $G$ acts on $X$ measurably, that is, a map $L_g$ on $X$ defined by $L_g (x) \coloneqq   gx$ is measurable. 
Assume that there is a multiplier $\chi$ on $G$ such that 
\begin{equation}\label{eq:rel-inv-mu}
\lambda(gB) = \chi(g) \lambda(B), \quad B \in \mathcal{B}, \quad g \in G. 
\end{equation} 
Let $\Theta$ be a set. 
Assume that $G$ also acts on $\Theta$. 
Let $\{p(\cdot | \theta)\colon\theta \in \Theta\}$ be a family of densities with respect to $\lambda$. 
Let $P_{\theta}(dx) \coloneqq   p(x|\theta)\lambda(dx)$. 
Assume that for every $\theta \in \Theta$, $p(x|\theta) > 0$, $\lambda$-a.e. $x$. 
Assume that 
\begin{equation}\label{eq:inv-density} 
\chi(g) p(gx | g\theta) = p(x|\theta), \ \textup{ $\lambda$-a.e. } x \in X, \quad \theta \in \Theta, \quad g \in G.   
\end{equation}
Let $f$ be a function on $(0,\infty)$ such that $f(1) = 0$ and $f$ is convex on $(0,\infty)$ (and strictly convex at $1$) and 
\[ \int_{X} \left|f \left( \frac{p(x|\theta_2)}{p(x|\theta_1)} \right)\right| p(x|\theta_1) \lambda(dx) < \infty, \quad  \theta_1, \theta_2 \in \Theta. \]
Let the $f$-divergence between $P_{\theta_1}$ and $P_{\theta_2}$ be 
\[ D_f (\theta_1\colon\theta_2) \coloneqq    \int_{X} f \left( \frac{p(x|\theta_2)}{p(x|\theta_1)} \right) p(x|\theta_1) \lambda(dx), \quad  \theta_1, \theta_2 \in \Theta. \]

\begin{Thm}[Group invariance in a transformation model]\label{thm:inv-div}
\[ D_f (g\theta_1 \colon g\theta_2)  = D_f (\theta_1 \colon \theta_2)  \ \  \theta_1, \theta_2 \in \Theta, \ g \in G. \]
\end{Thm}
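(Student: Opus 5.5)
The plan is a change of variables $x = gy$ in the integral defining $D_f(g\theta_1\colon g\theta_2)$. Two facts will be used. The first is the pointwise identity \eqref{eq:inv-density}. The second is a change-of-variables formula for $\lambda$ under $L_g$, which I will derive from \eqref{eq:rel-inv-mu}.

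First I establish that for every nonnegative measurable $h$ on $X$,
\[ \int_X h(gy)\,\lambda(dy) = \chi(g)^{-1}\int_X h(x)\,\lambda(dx). \]
Equivalently, the image measure $\lambda\circ L_g^{-1}$ equals $\chi(g)^{-1}\lambda$. Indeed, applying \eqref{eq:rel-inv-mu} with $g^{-1}$ gives $\lambda(g^{-1}B)=\chi(g^{-1})\lambda(B)$. Setting $B=X$ and $B = gC$ in the relation shows that $\chi$ is a homomorphism on the relevant values, so $\chi(g^{-1})=\chi(g)^{-1}$; here one uses that $L_g$ is a bijection with inverse $L_{g^{-1}}$. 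This yields the formula for indicators. Simple functions follow by linearity, and nonnegative measurable $h$ follow by monotone convergence. Integrable real $h$ are then handled by splitting into positive and negative parts.

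Next I apply this with
\[ h(x) = f\!\left(\frac{p(x|g\theta_2)}{p(x|g\theta_1)}\right)p(x|g\theta_1), \]
which is integrable by the standing assumption. By \eqref{eq:inv-density}, for $\lambda$-a.e.\ $y$ we have $p(gy|g\theta_i)=\chi(g)^{-1}p(y|\theta_i)$ for $i=1,2$. In the ratio the factor $\chi(g)^{-1}$ cancels, giving
\[ h(gy)=\chi(g)^{-1} f\!\left(\frac{p(y|\theta_2)}{p(y|\theta_1)}\right)p(y|\theta_1). \]
Combining this with the change-of-variables formula, the two factors $\chi(g)^{\pm1}$ cancel. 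The result is $D_f(g\theta_1\colon g\theta_2)=\int_X h\,d\lambda=\chi(g)\int_X h(gy)\,\lambda(dy)=D_f(\theta_1\colon\theta_2)$.

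The main obstacle is measure-theoretic bookkeeping rather than the algebra. There are three points to check.

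\begin{itemize}
\item The exceptional null sets in \eqref{eq:inv-density} depend on $\theta_i$ and $g$, and I must verify they remain null after transport by $L_g$. This holds because $\lambda(gN)=\chi(g)\lambda(N)=0$, which also ensures that the positivity of the densities a.e.\ is preserved.
\item I need $\chi(g)\in(0,\infty)$ and $\chi(g^{-1})=\chi(g)^{-1}$. I would obtain this from \eqref{eq:rel-inv-mu} applied to $gg^{-1}=e$ with a set $B$ of finite positive measure, which exists by $\sigma$-finiteness when $\lambda\neq 0$.
\item Measurability of $L_g^{-1}=L_{g^{-1}}$ is guaranteed by the hypothesis that $L_g$ is measurable for every $g$.
\end{itemize}
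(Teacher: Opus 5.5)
Your proposal is correct and follows essentially the same route as the paper: you substitute $x=gy$ via the change-of-variables consequence of \eqref{eq:rel-inv-mu}, then apply \eqref{eq:inv-density} pointwise so that the factors $\chi(g)^{\pm 1}$ cancel. Your extra bookkeeping (obtaining $\chi(g^{-1})=\chi(g)^{-1}$ from a set of finite positive measure, extending from indicators by monotone convergence, and checking null sets) only makes explicit what the paper leaves implicit. The $B=X$ step gives no information when $\lambda(X)=\infty$, so the finite-positive-measure argument in your bullet is the one to keep.
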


\begin{proof}
Let 
\[ F(x) \coloneqq   f \left( \frac{p(x|g\theta_2)}{p(x|g\theta_1)} \right) p(x|g\theta_1).  \]
Then, by \eqref{eq:rel-inv-mu}, 
\[ \chi(g) \int_{X} F(gx) \lambda(dx) = \int_{X} F(x) \lambda(dx) =  D_f (g\theta_1 \colon g\theta_2). \]
On the other hand, by \eqref{eq:inv-density}, 
\[ \chi(g) \int_{X} F(gx) \lambda(dx) =  D_f (\theta_1 \colon \theta_2). \]
Thus, we have the assertion. 
\end{proof}

The group $G$ also acts diagonally on the product space $\Theta \times \Theta$ by $(g, (\theta_1, \theta_2)) \mapsto (g\theta_1, g\theta_2)$. 
Let the orbit of $(\theta_1, \theta_2) \in \Theta \times \Theta$ be $O_{(\theta_1, \theta_2)} \coloneqq   \{g(\theta_1, \theta_2) | g \in G\}$. 
If  the action of $G$ on $\Theta \times \Theta$ is transitive, 
then  $O_{(\theta_1, \theta_2)} = \Theta \times \Theta$ and $D_f (\theta_1 \colon \theta_2) = 0$ for every $(\theta_1, \theta_2) \in \Theta \times \Theta$.  
However, even if the action of $G$ on $\Theta$ is transitive,  the action of $G$ on $\Theta \times \Theta$ can be non-transitive.

\begin{Thm}[Maximal invariant]\label{thm:abstract}
Let $m = m(\theta_1, \theta_2)$ be a self-map on $\Theta \times \Theta$ such that 
$m(\theta_1, \theta_2) \in O_{(\theta_1, \theta_2)}$ for every $(\theta_1, \theta_2) \in \Theta \times \Theta$. 
Assume that $m(g\theta_1, g\theta_2) = m(\theta_1, \theta_2)$ for every $g \in G$ and $\theta_1, \theta_2 \in \Theta$. 
Then the map $m$ is a maximal invariant. 
In particular, 
$D_f (\theta_1 \colon \theta_2) $ is a function of $m(\theta_1, \theta_2)$. 
\end{Thm}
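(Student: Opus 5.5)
The plan is to use the standard definition of a maximal invariant for the diagonal action (cf.\ \cite{Eaton1989}). A map $m$ on $\Theta\times\Theta$ is a maximal invariant if two conditions hold. First, it is invariant: $m(g\theta_1,g\theta_2)=m(\theta_1,\theta_2)$ for all $g\in G$. Second, it separates orbits: $m(\theta_1,\theta_2)=m(\theta_1',\theta_2')$ implies $(\theta_1',\theta_2')=g(\theta_1,\theta_2)$ for some $g\in G$. Invariance is assumed in the statement, so only the orbit-separation property needs proof. After that, the claim about $D_f$ will follow from Theorem~\ref{thm:inv-div}.

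For orbit separation, suppose $m(\theta_1,\theta_2)=m(\theta_1',\theta_2')$. By hypothesis $m(\theta_1,\theta_2)\in O_{(\theta_1,\theta_2)}$ and $m(\theta_1',\theta_2')\in O_{(\theta_1',\theta_2')}$. These two orbits therefore share a common point. Orbits of a group action partition the space, since the relation ``lies in the same orbit'' is an equivalence relation: it is reflexive via the identity, symmetric via inverses, and transitive via products. Hence $O_{(\theta_1,\theta_2)}=O_{(\theta_1',\theta_2')}$, and so $(\theta_1',\theta_2')=g(\theta_1,\theta_2)$ for some $g\in G$. Concretely, if $m(\theta_1,\theta_2)=h(\theta_1,\theta_2)$ and $m(\theta_1',\theta_2')=h'(\theta_1',\theta_2')$, then one can take $g=h'^{-1}h$.

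For the consequence, define $\Phi$ on the image of $m$ by $\Phi(u)\coloneqq D_f(u)$. Here $u$ is itself a pair of parameters, since $m$ is a self-map of $\Theta\times\Theta$. Because $m(\theta_1,\theta_2)=h(\theta_1,\theta_2)$ for some $h\in G$, Theorem~\ref{thm:inv-div} gives $D_f(\theta_1\colon\theta_2)=D_f(h\theta_1\colon h\theta_2)=\Phi(m(\theta_1,\theta_2))$. Therefore $D_f$ factors through $m$.

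Alternatively, one could argue from maximality alone: if $m$ agrees on two pairs, they lie in a common orbit, so by Theorem~\ref{thm:inv-div} $D_f$ takes the same value on them. I expect no genuine obstacle here. The only point needing care is fixing the definition of maximal invariant and noting that orbits partition $\Theta\times\Theta$.
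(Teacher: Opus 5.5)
Your proof is correct and follows the paper's argument: invariance is taken from the hypothesis, and orbit separation comes from the fact that $m(\theta_1,\theta_2)=m(\theta_1',\theta_2')$ forces the two orbits to intersect and hence coincide. You go slightly further by giving the explicit witness $g=h'^{-1}h$ and by deriving the factorization $D_f(\theta_1\colon\theta_2)=D_f(h\theta_1\colon h\theta_2)$, with $(h\theta_1,h\theta_2)=m(\theta_1,\theta_2)$, from Theorem~\ref{thm:inv-div}; the paper leaves that last step implicit.
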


\begin{proof}
If $m(\theta_1, \theta_2) = m(\theta_1^{\prime}, \theta_2^{\prime})$, then 
$O_{(\theta_1, \theta_2)} \cap O_{(\theta_1^{\prime}, \theta_2^{\prime})} \ne \emptyset$. 
Hence, 
$O_{(\theta_1, \theta_2)} = O_{(\theta_1^{\prime}, \theta_2^{\prime})}$. 
Since $(\theta_1^{\prime}, \theta_2^{\prime}) \in  O_{(\theta_1^{\prime}, \theta_2^{\prime})} = O_{(\theta_1, \theta_2)}$, 
there exists $g \in G$ such that $(\theta_1^{\prime}, \theta_2^{\prime}) = g(\theta_1, \theta_2)$. 
\end{proof}

We remark that, in general, such a self-map exists due to {\it the axiom of choice}. 
It is important to give explicit constructions of the map.  

Assume that $G$ acts on $\Theta$ transitively. 
Let $H$ be the stabilizer of $\theta_0 \in \Theta$. 
Specifically, $H \coloneqq  \{h \in G \colon h\theta_0 = \theta_0 \}$ for some $\theta_0 \in \Theta$. 
This is a subgroup of $G$. 
The group structure of $H$ does not depend on the choice of $\theta_0$, since the action is transitive. 
Define the set of left cosets by $G/H \coloneqq  \{gH \colon g \in G\}$. 

Define a map $\phi \colon G/H \to \Theta$ by $\phi(gH) \coloneqq  g\theta_0$. 
This is well-defined and is a bijection. 
We identify $\Theta$ as $G/H$ by the map $\phi$. 
Define the {\it double coset space} by $H \backslash G / H \coloneqq   \{HgH \colon g \in G \}$.

\begin{Thm}[Double coset maximal invariance for transitive actions]\label{thm:double-coset}
Define a map $m \colon \Theta \times \Theta \to H \backslash G / H$ by 
\[ m(g_1 H, g_2 H) \coloneqq  H g_1^{-1} g_2 H. \]
Then, the map $m$ is a maximal invariant. 
\end{Thm}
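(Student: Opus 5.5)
The proof checks three things in turn: that $m$ is well defined, that it is invariant under the diagonal action, and that it is maximal. Throughout, $\Theta$ is identified with $G/H$ via $\phi$, so the diagonal action becomes $g(g_1H, g_2H) = (gg_1H, gg_2H)$.

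\emph{Well-definedness.} Suppose $g_1H = g_1'H$ and $g_2H = g_2'H$. Then $g_1' = g_1h_1$ and $g_2' = g_2h_2$ for some $h_1, h_2 \in H$. Hence
\[ H g_1'^{-1} g_2' H = H h_1^{-1} g_1^{-1} g_2 h_2 H = H g_1^{-1} g_2 H, \]
since $Hh_1^{-1} = H$ and $h_2H = H$. So $m$ does not depend on the chosen representatives.

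\emph{Invariance.} For $g \in G$ we have $m(gg_1H, gg_2H) = H (gg_1)^{-1} gg_2 H = H g_1^{-1} g_2 H$, because the factors $g^{-1}g$ cancel.

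\emph{Maximality.} This is the only step with content, though it is still elementary. Assume $Hg_1^{-1}g_2H = Hg_1'^{-1}g_2'H$. Then there exist $h, k \in H$ with
\[ g_1'^{-1} g_2' = h\, g_1^{-1} g_2\, k. \]
The aim is to produce one $g \in G$ carrying the first pair to the second. Take $g \coloneqq g_1' h g_1^{-1}$.
\begin{itemize}
\item First coordinate: $g g_1 H = g_1' h H = g_1' H$.
\item Second coordinate: $g g_2 H = g_1' h g_1^{-1} g_2 H$. By the displayed relation, $h g_1^{-1} g_2 = g_1'^{-1} g_2' k^{-1}$, so $g g_2 H = g_2' k^{-1} H = g_2' H$.
\end{itemize}
Thus $g(g_1H, g_2H) = (g_1'H, g_2'H)$, and the two pairs lie in the same orbit $O_{(\theta_1,\theta_2)}$.

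Invariance together with this converse is exactly the definition of a maximal invariant: $m$ is constant on orbits and separates distinct orbits. Note that $m$ takes values in $H\backslash G/H$ rather than in $\Theta\times\Theta$, so Theorem~\ref{thm:abstract} does not apply literally. Instead the definition is verified directly, as above. Combined with Theorem~\ref{thm:inv-div}, this shows that $D_f$ factors through $m$.
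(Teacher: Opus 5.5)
Your proof is correct and follows essentially the same route as the paper: the same three checks (well-definedness, diagonal invariance, maximality), with your element $g = g_1' h g_1^{-1}$ being the paper's $g_1' h_1^{-1} g_1^{-1}$ once the double-coset relation is written in the opposite direction. Your remark that Theorem~\ref{thm:abstract} does not apply literally, because $m$ is not a self-map of $\Theta \times \Theta$, is accurate; the paper likewise verifies the definition directly.
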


\begin{proof}
Assume that $g_i H = g_i^{\prime} H$, $i=1,2$. 
Then there exist $h_i \in H$, $i=1,2$, such that $g_i = g_i^{\prime} h_i$, $i=1,2$. 
Since $H h_1^{-1} = h_2 H = H$, 
\[ H g_1^{-1} g_2 H = H h_1^{-1} (g_1^{\prime})^{-1} g_2^{\prime} h_2 H = H (g_1^{\prime})^{-1} g_2^{\prime} H. \]
Hence, the map $m$ is well-defined. 
We see that 
\[ m(g(g_1 H), g(g_2 H)) = m((g g_1)H, (g g_2)H) = H (g g_1)^{-1} (g g_2) H\]
\[ = H g_1^{-1} g_2 H = m(g_1 H, g_2 H). \]
Assume that $m(g_1 H, g_2 H) = m(g_1^{\prime} H, g_2^{\prime} H)$. 
Then $H g_1^{-1} g_2 H = H (g_1^{\prime})^{-1} g_2^{\prime} H$. 
Then there exist $h_1, h_2 \in H$ such that $g_1^{-1} g_2 = h_1 (g_1^{\prime})^{-1} g_2^{\prime} h_2$. 
Let $g \coloneqq  g_1^{\prime} h_1^{-1} g_1^{-1}$. 
Then $g g_1 H = g_1^{\prime} H$ and $g g_2 H = g_2^{\prime} H$. 
\end{proof}

\section{Examples}

\subsection{Location-scale models}\label{sec:lsmodel}

We give an explicit construction of a maximal invariant in the location-scale model. 
Let $d \ge 1$. 
Let $\GL(d, \mathbb{R})$ be the general linear group on $\mathbb{R}^d$. 
Denote the unit matrix of size $d$ by $I_d$. 

\begin{Exa}\label{exa:naive}
We consider the multi-dimensional location-scale model in \cite[Section 3]{Eaton1989}. 
However, this is only a naive formulation, since the resulting object below fails to be a maximal invariant when $d\ge 2$.  
\begin{itemize}
\item (sample space) $(X, \mathcal{B}, \lambda) = (\mathbb{R}^d, \mathcal{B}(\mathbb{R}^d), \ell)$, where $\ell$ denotes the Lebesgue measure. 
\item (group) $G = \textup{Aff}(d) = \GL(d, \mathbb{R}) \times \mathbb{R}^d$, and the group operation is defined by $(A_1, b_1)(A_2, b_2) = (A_1 A_2, A_1 b_2 + b_1)$. 
The unit element is $(I_d, 0)$. 
\item (action on sample space) An action $G \curvearrowright X$ is defined by $((A,b), x) \mapsto Ax + b$. 
\item (multiplier) $\chi((A,b)) \coloneqq   |\det(A)|$ is a multiplier on $G$. 
Then by change of variables, \eqref{eq:rel-inv-mu} holds, that is, 
\[ \ell(A(B) + b) = \chi((A,b)) \ell(B), \quad B \in \mathcal{B}(\mathbb{R}^d), \]
where we let $A(B) + b \coloneqq  \{Ax+b \colon x \in B\}$.   
\item (base density) Let $f_0$ be a positive probability density function, that is, $f_0 > 0$ and $\displaystyle \int_{\mathbb{R}^d} f_0 (x) \ell(dx) = 1$. 
Assume also that $f_0$ is radial, that is, $f_0 (x) = f_1 (\|x\|^2)$ for some positive Borel measurable function $f_1$ on $[0,\infty)$. 
\item (parameter space) $\Theta = \mathbb{R}^d \times \SPD(d)$, 
where $\SPD(d)$ is the set of symmetric and positive-definite matrices. 
\item (parametric family)
\[ p\left(x | (\mu, \Sigma) \right) \coloneqq   \frac{1}{\sqrt{\det \Sigma}} f_0 \left(\Sigma^{-1/2}(x - \mu)\right), \ \ x \in \mathbb{R}^d. \]
This is a probability density function. 
\item  (action on parameter space) An action $G \curvearrowright \Theta$ is defined by $((A,b), (\mu, \Sigma)) \mapsto (A\mu + b, A \Sigma A^{\top})$. 
\end{itemize}

Since $f_0$ is radial, \eqref{eq:inv-density} holds, that is, 
\[ \chi((A,b)) p\left((A,b)x | (A,b)(\mu, \Sigma)\right) = p\left(x | (\mu, \Sigma) \right). \]

Then 
$D_f \left((\mu_1, \Sigma_1), (\mu_2, \Sigma_2)\right) $ is a function of $\left(\Sigma_2^{-1/2} (\mu_1 - \mu_2),  \Sigma_2^{-1/2} \Sigma_1^{1/2}\right)$, by the change of variables formula. 
See \cite[Proposition 1]{NO2024}. 

However, $\Sigma_2^{-1/2} \Sigma_1^{1/2}$ can be asymmetric and fail to be positive-definite if $d \ge 2$. 
See \cite[Remark 3.1.15]{BCR1984}. 
For $d = 1$, this is a maximal invariant. 
\end{Exa}

Now we modify the parameter space in order to obtain a maximal invariant.

\begin{Exa}\label{exa:modified}

\!

\begin{itemize}
\item (sample space) $(X, \mathcal{B}, \lambda) = (\mathbb{R}^d, \mathcal{B}(\mathbb{R}^d), \ell)$, where $\ell$ denotes the Lebesgue measure.  
\item (group) $G = \textup{Aff}(d) = \GL(d, \mathbb{R}) \times \mathbb{R}^d$, and the group operation is defined by $(A_1, b_1)(A_2, b_2) = (A_1 A_2, A_1 b_2 + b_1)$. 
The unit element is $(I_d, 0)$. 
\item (action on sample space) An action $G \curvearrowright X$ is defined by $((A,b), x) \mapsto Ax + b$. 
\item (modified parameter space) $\Theta = \mathbb{R}^d \times \GL(d, \mathbb{R})$. 
\item (base density) Let $f_0$ be a positive probability density, that is, $f_0 > 0$ and $\displaystyle \int_{\mathbb{R}^d} f_0 (x) \ell(dx) = 1$. 
Assume also that $f_0$ is radial, that is, $f_0 (x) = f_1 \left(\|x\|^2\right)$ for some positive Borel measurable function $f_1$ on $[0,\infty)$.
\item (parametric family) 
\[ p\left(x | (\mu, V) \right) \coloneqq   \frac{1}{|\det V|} f_0 \left(V^{-1}(x - \mu)\right), \ \ x \in \mathbb{R}^d. \]
This is a probability density function. 
\item (action on parameter space) An action $G \curvearrowright \Theta$ is defined by $((A,b), (\mu, V)) \mapsto (A\mu + b, A V)$. 
\item (multiplier) $\chi((A,b)) \coloneqq   |\det(A)|$ is a multiplier on $G$. 
Then by the change of variable formula, \eqref{eq:rel-inv-mu} holds, that is, 
\[ \ell(A(B) + b) = \chi((A,b)) \ell(B), \ B \in \mathcal{B}(\mathbb{R}^d). \] 
\end{itemize}

Since $f_0$ is radial, \eqref{eq:inv-density} holds, that is, 
\[ \chi((A,b)) \ p\left((A,b)x | (A,b)(\mu, V)\right) = p\left(x | (\mu, V)\right). \]

Consider a  map $m \colon \Theta \times \Theta \to \Theta$ defined by 
\[ m\left((\mu_1, V_1), (\mu_2, V_2)\right) \coloneqq   \left(V_2^{-1} (\mu_1 - \mu_2),  V_2^{-1} V_1 \right). \]

By changing variables, 
$D_f \left((\mu_1, V_1), (\mu_2, V_2)\right) $ is a function of $m\left((\mu_1, V_1), (\mu_2, V_2)\right)$. 
Furthermore, the map $m$ is a maximal invariant. 
Indeed, 
if $m\left((\mu_1, V_1), (\mu_2, V_2)\right) = m\left((\mu_1^{\prime}, V_1^{\prime}), (\mu_2^{\prime}, V_2^{\prime})\right)$, 
then  $V_1^{\prime} V_1^{-1} = V_2^{\prime} V_2^{-1}$ and $\mu_1^{\prime} - A \mu_1 = \mu_2^{\prime} - A \mu_2$. 
Let $A \coloneqq   V_2^{\prime} V_2^{-1}$ and $b = \mu_1^{\prime} - A \mu_1$. 
Then $(A,b) (\mu_i, V_i) = (\mu_i^{\prime}, V_i^{\prime})$, $i=1,2$. 

However, this model {\it may} not satisfy the identifiability property. 
We need to modify the definition of $\Theta$ by introducing an equivalence relation. 

By changing variables, the characteristic function $\varphi_{\mu, V}$ of the density $p(x | (\mu, V) )$ satisfies the following. 
\[ \varphi_{\mu, V}(\xi) = \exp\left(i \langle \xi, \mu\rangle\right) \varphi_{0, I_d} \left(V^{\top} \xi\right), \ \xi \in \mathbb{R}^d.  \]
Since $f_0$ is radial, $\varphi_{0, I_d} $ is also radial. 
In particular, $\varphi_{0, I_d}(\xi) = \varphi_{0, I_d}(-\xi)$ for every $\xi \in \mathbb{R}^d$, and therefore $\varphi_{0, I_d}$ is real-valued. 
Since $f_0$ is strictly positive, 
$\left| \varphi_{0, I_d} (\xi) \right| < 1$ for every $\xi \ne 0$. 
By the Riemann--Lebesgue lemma, 
$\displaystyle \lim_{\|\xi\| \to \infty} \varphi_{0, I_d} (\xi) = 0$.

We also assume that 
\begin{equation}\label{eq:second-integrable}
\int_{\mathbb{R}^d} \|x\|^2 f_0 (x) \ell(dx) < \infty. 
\end{equation}

By considering the first and second derivatives of 
\[ \Phi(t) \coloneqq   \varphi_{0, I_d} \left((t,0,\dots,0) \right) = \int_{\mathbb{R}^d} \cos(tx_1) f_0 (x) \ell(dx) \] 
in a neighborhood of $0$, 
$\Phi$ is strictly concave around $t = 0$. 
By this and the fact that $|\Phi(t)| < 1$ for every $t \ne 0$, 
there exists $\epsilon_0 \in (0,1/2)$ such that for every $t_0 \in (1-\epsilon_0,1)$ there exists a unique $t_1 \in (0,\infty)$ such that $\Phi(t_1) = t_0$.

Since $\varphi_{0,I_d}(0) = 1$ and $\varphi_{0,I_d}$ is continuous, there exists
a neighborhood $U$ of $0$ such that
$\varphi_{0,I_d}(V_i^\top \xi) > 0$ for $i=1,2$ and $\xi \in U$.
Hence, for $\xi \in U$,
\[ \exp(i\langle \xi,\mu_1 - \mu_2 \rangle) = \frac{\varphi_{0,I_d}(V_2^\top\xi)}{\varphi_{0,I_d}(V_1^\top\xi)} \in (0,\infty). \]
Since the left-hand side has modulus one, it must be equal to one. Thus
$\exp(i\langle \xi,\mu_1 - \mu_2\rangle) = 1$ for all $\xi$ in a neighborhood of $0$,
and consequently $\mu_1 = \mu_2$.  
Hence, 
\[   \Phi\left( \| V_1^{\top} \xi \|\right) = \varphi_{0, I_d} \left(V_1^{\top} \xi\right) = \varphi_{0, I_d} \left(V_2^{\top} \xi\right) = \Phi\left(\| V_2^{\top} \xi \|\right), \quad \xi \in \mathbb{R}^d. \]
Hence, there exists $\epsilon_1 > 0$ such that for every $\xi$ such that $0 < \|\xi\| < \epsilon_1$, 
$\Phi(\| V_1^{\top} \xi \|) = \Phi(\| V_2^{\top} \xi \|) \in (1-\epsilon_0, 1)$ and furthermore, $\| V_1^{\top} \xi \| = \| V_2^{\top} \xi \|$. 
Hence, for every $\xi \in \mathbb{R}^d$, 
$\| V_1^{\top} \xi \| = \| V_2^{\top} \xi \|$. 
By considering the second-order partial derivatives of $\|V_i^{\top} \xi\|^2$ with respect to $\xi$, $i=1,2$, 
we see that $V_1 V_1^{\top} = V_2 V_2^{\top}$. 

Conversely, 
if $V_1 V_1^{\top} = V_2 V_2^{\top}$ holds, then 
there exists $P \in \O(d, \mathbb{R})$ such that $V_1 = V_2 P$.  
Since $\varphi_{0, I_d} $ is radial, 
if additionally $\mu_1 = \mu_2$, 
$\varphi_{\mu_1, V_1}(\xi) = \varphi_{\mu_2, V_2}(\xi)$.

Define an equivalence relation on $\GL(d, \mathbb{R})$ by 
$A \sim B$ if and only if there exists $P \in \O(d, \mathbb{R})$ such that $A = BP$. 
We denote the quotient space with respect to this relation by $\GL(d, \mathbb{R}) / \O(d,\mathbb{R})$ and the equivalence class of $V \in \GL(d,\mathbb{R})$ by $[V]$. 
It is a homogeneous space. 

If we replace $\GL(d, \mathbb{R})$ with the quotient space $\GL(d, \mathbb{R}) / \O(d,\mathbb{R})$, 
then the identifiability property holds for $\Theta = \mathbb{R}^d \times \GL(d, \mathbb{R}) / \O(d,\mathbb{R})$.

Let $\widetilde\Theta \coloneqq  \mathbb{R}^d \times \GL(d, \mathbb{R}) / \O(d,\mathbb{R})$. 
This is a modified parameter space.  
The action of $G = \textup{Aff}(d)$ on $\widetilde\Theta$ is given by
\[ ((A,b),(\mu,[V])) \mapsto (A\mu+b,[AV]). \]
Let $\theta_0 \coloneqq  (0,[I_d]) \in \widetilde\Theta$ and let the stabilizer 
\[ H \coloneqq  \mathrm{Stab}_G(\theta_0) = \left\{(P,0)\in \textup{Aff}(d): P \in \O(d,\mathbb{R}) \right\}. \]
Then the action of $G$ on $\widetilde{\Theta}$ is transitive and we can identify $\widetilde{\Theta}$ with $G/H$.
Hence, by Theorem \ref{thm:double-coset}, 
the map $\widetilde m  \colon \widetilde\Theta \times \widetilde\Theta \to H \backslash G/H$
defined by
\[ \widetilde m(\theta_1,\theta_2) \coloneqq  H g_2^{-1}g_1 H, \ \theta_i = g_i \theta_0, \ i = 1,2, \]
is a maximal invariant\footnote{In the following we use the reversed convention $H g_2^{-1}g_1 H$, which is also a maximal invariant and is convenient for comparison with $V_2^{-1} V_1$.}.
In particular, for $\theta_{i} = \left(\mu_i, [V_i] \right)$ we may choose representatives
$V_i\in \GL(d,\mathbb{R})$ and let $g_i\coloneqq  (V_i,\mu_i)\in \textup{Aff}(d)$. 
Hence, 
\[ g_2^{-1}g_1 = \left(V_2^{-1}V_1,\; V_2^{-1}(\mu_1-\mu_2)\right). \]
Therefore, 
\[ \widetilde m\left((\mu_1,[V_1]),(\mu_2,[V_2]) \right) = H\left(V_2^{-1}V_1,\; V_2^{-1}(\mu_1-\mu_2)\right)H. \]
Consequently, $D_{f} \left((\mu_1,[V_1]):(\mu_2,[V_2]) \right)$ is a function of
$\widetilde m\left((\mu_1,[V_1]),(\mu_2,[V_2]) \right)$.

We can further parameterize the double coset by a singular value decomposition.
Set
$S \coloneqq  V_2^{-1}V_1\in \GL(d,\mathbb{R}), \  \nu \coloneqq  V_2^{-1}(\mu_1-\mu_2) \in \mathbb{R}^d$. 
Take a singular value decomposition $S = U\Sigma W^\top$ with
$U, W \in \O(d,\mathbb{R})$ and
\[ \Sigma = \mathrm{diag}(\sigma_1,\dots,\sigma_d), \  \ \sigma_1 \ge \cdots \ge \sigma_d >0. \]
Then
\[ \left(U^\top,0\right)\,(S,\nu)\,(W,0) = \left(\Sigma,\;U^\top \nu \right). \]

We remark that the singular value decomposition is {\it not} unique. 
If $Q\in \O(d,\mathbb{R})$ satisfies $Q\Sigma = \Sigma Q$, 

then $S = (UQ)\Sigma(WQ)^\top$ is another singular value decomposition  of $S$. 
Conversely, if $S = U_i \Sigma W_i^\top$ and $U_i, W_i \in \O(d,\mathbb{R})$ for $i=1,2$, then 
there exists $Q \in \O(d,\mathbb{R})$ such that $Q\Sigma = \Sigma Q$, $U_2 = U_1 Q$ and $W_2 = W_1 Q$. 

Therefore, the map $z \coloneqq  U^\top \nu$ is determined only up to the action
$z\mapsto Q^\top z$ satisfying $Q\Sigma = \Sigma Q$.

Write 
\[ \Sigma = \mathrm{diag}\left(\tau_1 I_{m_1},\dots,\tau_k I_{m_k} \right), \ \tau_1 > \cdots > \tau_k > 0, \quad \sum_{j=1}^k m_j = d, \]
and decompose $z = (z^{(1)},\dots,z^{(k)}) \in \mathbb{R}^{m_1} \times \cdots \times \mathbb{R}^{m_k}$ accordingly. 
Then $Q\Sigma = \Sigma Q$ if and only if $Q = \mathrm{diag}(Q_1,\dots,Q_k)$ with
$Q_j\in \O(m_j, \mathbb{R})$, and therefore $z = (z^{(1)},\dots,z^{(k)})$ is determined up to
$z^{(j)} \mapsto Q_j^\top z^{(j)}$.

Thus, the orbit of $z$ is determined by the block norms
$r_j \coloneqq  \left\| z^{(j)} \right\|, \  j = 1,\dots,k$. 
Therefore, $D_{f} \left((\mu_1,[V_1]):(\mu_2,[V_2]) \right)$ depends only on
the singular values of $V_2^{-1}V_1$ and the block norms $(r_1,\dots,r_k)$ determined by $V_2^{-1}(\mu_1 - \mu_2)$ and the multiplicity of the singular values of $V_2^{-1}V_1$.
\end{Exa}

The preceding construction gives the following canonical form of the invariant reduction.

\begin{Prop}
Suppose the setting of Example \ref{exa:modified} with \eqref{eq:second-integrable}. 
Let 
$S \coloneqq  V_2^{-1}V_1$ and $\nu \coloneqq  V_2^{-1}(\mu_1-\mu_2)$. 
Let $S = U\Sigma W^\top$ be a singular value decomposition of $S$, where
\[ \Sigma = \operatorname{diag}(\tau_1 I_{m_1},\ldots,\tau_k I_{m_k}), \qquad \tau_1 > \cdots > \tau_k >0. \]
Write
$U^{\top} \nu \eqqcolon \left(z^{(1)},\ldots,z^{(k)} \right)$ 
according to the same block decomposition, and define
$r_j \coloneqq \|z^{(j)}\|,\quad j=1,\ldots,k$.
Then the maximal invariant is parametrized by the singular values $(\tau_1, \dots, \tau_k)$ of $S$, with their multiplicities $(m_1, \dots, m_k)$, together with the block norms
$(r_1, \dots, r_k)$. 
Hence $D_f\left((\mu_1,[V_1]):(\mu_2,[V_2])\right)$ depends only on these quantities.
\end{Prop}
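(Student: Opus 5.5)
The plan is to show that the data $(\tau_j, m_j, r_j)_{j=1}^k$ is a complete invariant for the double coset $H(S,\nu)H$ in $H\backslash G/H$. Combined with Theorem~\ref{thm:double-coset} (in the reversed convention of the footnote) and Theorem~\ref{thm:inv-div}, this gives both assertions: these quantities parametrize the maximal invariant, and $D_f$ depends only on them. Two preliminaries are needed first. The divergence must be well defined on $\widetilde\Theta$; this holds because, by the identifiability argument of Example~\ref{exa:modified} (which uses \eqref{eq:second-integrable}), the densities $p(\cdot|(\mu,V))$ and $p(\cdot|(\mu,VP))$ coincide for $P\in\O(d,\mathbb{R})$. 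And \eqref{eq:inv-density} descends to the quotient, so Theorem~\ref{thm:inv-div} applies on $\widetilde\Theta$.

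First I compute the two-sided $H$-action explicitly: $(P,0)(S,\nu)(Q,0)=(PSQ,\,P\nu)$ for $P,Q\in\O(d,\mathbb{R})$. Hence $H(S,\nu)H$ is exactly the orbit of $(S,\nu)$ under $(S,\nu)\mapsto(PSQ,P\nu)$.

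The next step is well-definedness, i.e.\ that the tuple is constant on each double coset and does not depend on the choice of SVD. Suppose $S'=PSQ$ and $\nu'=P\nu$. If $S=U\Sigma W^\top$, then $S'=(PU)\Sigma(Q^\top W)^\top$, so $S'$ has the same $\Sigma$. Moreover $(PU)^\top\nu'=U^\top\nu$, so the same $z$ arises. Any other SVD of $S'$ differs by a block-diagonal $Q=\mathrm{diag}(Q_1,\dots,Q_k)$ commuting with $\Sigma$; this is the uniqueness statement recalled in Example~\ref{exa:modified}, which I would justify by noting that $S S^\top=U\Sigma^2U^\top$ fixes the eigenspaces of $SS^\top$. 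Such a change replaces $z^{(j)}$ by $Q_j^\top z^{(j)}$ and so preserves each $r_j$. The singular values and multiplicities are of course intrinsic to $S$.

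Finally I prove completeness. Suppose $(S,\nu)$ and $(S',\nu')$ give the same $(\tau_j,m_j,r_j)$. The computation in Example~\ref{exa:modified} puts each pair in its double coset in the form $(\Sigma,z)$ and $(\Sigma,z')$ with the same $\Sigma$. Since $\|z^{(j)}\|=\|z'^{(j)}\|$, I can choose $Q_j\in\O(m_j,\mathbb{R})$ with $Q_j^\top z^{(j)}=z'^{(j)}$; this is transitivity of $\O(m_j)$ on spheres, including the trivial case $r_j=0$. Setting $Q=\mathrm{diag}(Q_j)$, the element $(Q^\top,0)(\Sigma,z)(Q,0)=(\Sigma,z')$ lies in the same double coset, so the two double cosets coincide. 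The main obstacle is mild: being careful that the SVD ambiguity is exactly the centralizer of $\Sigma$ in $\O(d)$, since this is what makes the block norms, rather than finer data, the right invariants.
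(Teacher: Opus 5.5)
Your proposal is correct and follows the paper's route. You identify the maximal invariant with the double coset $H(S,\nu)H$ via Theorem~\ref{thm:double-coset}, reduce it to the SVD normal form $(\Sigma,U^\top\nu)$, and note that the residual freedom is the centralizer $\O(m_1,\mathbb{R})\times\cdots\times\O(m_k,\mathbb{R})$ of $\Sigma$, whose orbits on $z$ are classified by the block norms. You are more explicit than the paper about independence of the representative $(PSQ,P\nu)$ and about completeness via transitivity of $\O(m_j,\mathbb{R})$ on spheres.

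One small correction: the identity $p(\cdot\mid(\mu,VP))=p(\cdot\mid(\mu,V))$ for $P\in\O(d,\mathbb{R})$ follows directly from radiality of $f_0$. The assumption \eqref{eq:second-integrable} is needed only for the converse direction (identifiability), which your argument never uses.
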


 \subsection{Dual group actions for dually flat  Bregman/Fenchel-Young divergences}\label{sec:bdfy}

\begin{Exa}\label{exa:dual-group-action}
We consider a centered matrix scale family and a dually flat divergence~\cite{ohara1996dualistic,IG-2016} (i.e., Bregman divergence or Fenchel-Young divergence when coordinatized) on the same scale
parameter space.

\begin{itemize}
\item (sample space)  $(X,\mathcal{B},\lambda)=(\mathbb{R}^d,\mathcal{B}(\mathbb{R}^d),\ell)$, where
$\ell$ denotes the Lebesgue measure.

\item (group)  $G=GL(d,\mathbb{R})$, with group operation given by matrix multiplication.

\item (action on sample space)  An action $G\curvearrowright X$ is defined by $(A,x)\mapsto Ax$.

\item (multiplier)  $\chi(A)\coloneqq |\det A|$. Then
\[ \ell(A(B))=\chi(A)\ell(B), \quad B\in \mathcal{B}(\mathbb{R}^d). \]

\item (base density)  Let $f_0$ be a positive probability density on $\mathbb{R}^d$, and assume that
$f_0$ is radial, that is, $f_0(x) = f_1 \left(\|x\|^2 \right)$
for some positive Borel measurable function $f_1$ on $[0,\infty)$.

\item (parameter space for the scale family)  $\Theta = \SPD(d)$.

\item (parametric family)  For $\Sigma\in \SPD(d)$, define
\[ p(x\mid \Sigma)\coloneqq \frac{1}{\sqrt{\det\Sigma}}\,f_0(\Sigma^{-1/2}x), \quad x\in\mathbb{R}^d.\]
This is a probability density on $\mathbb{R}^d$.

\item (group action on parameter space) An action $G\curvearrowright \Theta$ is defined by $(A,\Sigma) \mapsto A\Sigma A^{\top}$.

\item (dual coordinate spaces)  Let $\mathcal{N} \coloneqq -\SPD(d)$ and $\mathcal{E}\coloneqq \SPD(d)$. 

\item (potential function)  Fix $\alpha>0$ and define
\[ F(\theta) \coloneqq -\alpha\log\det(-\theta), \quad \theta\in\mathcal{N}. \]
Then $F$ is a Legendre-type convex function and
$\eta = \nabla F(\theta) = -\alpha\theta^{-1}\in\mathcal{E}$. 

\item (identification with the scale parameter)  We identify $\Sigma\in \SPD(d)$ with
\[ \theta = -\alpha\Sigma^{-1},\quad \eta=\Sigma.\]

\item (dual group actions)  The congruence action on $\Sigma$ induces dual actions on $(\theta,\eta)$:
\[ A\circ\theta\coloneqq A^{-\top}\theta A^{-1},\quad A\bullet\eta\coloneqq A\eta A^\top. \]

\item (pairing)  The pairing between $\mathcal{N}$ and $\mathcal{E}$ is given by
\[ \langle X,Y\rangle\coloneqq \operatorname{tr}(XY), \quad X \in \mathcal{N}, Y \in \mathcal{E}. \]
\end{itemize}

Since $f_0$ is radial, \eqref{eq:inv-density} holds, that is, 
\[ \chi(A)\,p(Ax\mid A\Sigma A^\top) = p(x\mid\Sigma).\]
Hence, by Theorem \ref{thm:inv-div},
\[ D_{f} (A \Sigma_{1} A^\top : A \Sigma_{2} A^\top) = D_{f}(\Sigma_1 : \Sigma_2). \]

Moreover, the action of $G$ on $\SPD(d)$ is transitive. 
With $\Sigma_0 = I_d$, the stabilizer
is $H=O(d,\mathbb{R})$, and a maximal invariant of $(\Sigma_1,\Sigma_2)$ is
\[ m(\Sigma_1,\Sigma_2) = HA_2^{-1}A_1H,\quad \Sigma_i=A_iA_i^\top. \]
Equivalently, $m(\Sigma_1,\Sigma_2)$ is determined by the eigenvalues of
$\Sigma_2^{-1}\Sigma_1$. Therefore $D_f(\Sigma_1:\Sigma_2)$ is a function of this spectrum.

The convex conjugate of $F$ is
\[ F^{*} (\eta) = -\alpha\log\det\eta + \alpha d(\log\alpha-1),\quad \eta\in \SPD(d). \]
Furthermore,
\[ \nabla F(A\circ\theta)=A\bullet\nabla F(\theta), \quad \langle A\circ\theta,A\bullet\eta\rangle = \langle\theta,\eta\rangle, \]
and
\[ F(A\circ\theta) = F(\theta)+2\alpha\log|\det A|, \quad F^*(A\bullet\eta) = F^*(\eta)-2\alpha\log|\det A|.\]
Hence the Fenchel--Young divergence~\cite{Acharyya2013LearningToRank}
\[ Y_{F,F^*}(\theta,\eta')\coloneqq F(\theta)+F^*(\eta')-\langle\theta,\eta'\rangle \]
is invariant under the dual actions denoted by $\circ$ and $\bullet$:
\[ Y_{F,F^*}(A\circ\theta,A\bullet\eta')=Y_{F,F^*}(\theta,\eta'). \]

Since
\[ B_F(\theta_1:\theta_2) = F(\theta_1)-F(\theta_2)-\langle \nabla F(\theta_2),\theta_1-\theta_2\rangle = Y_{F,F^*}(\theta_1,\eta_2), \qquad \eta_2=\nabla F(\theta_2),\]
the associated Bregman divergence also satisfies
\[ B_F (A\circ\theta_1:A\circ\theta_2) = B_F(\theta_1:\theta_2).\]

In terms of the scale matrices $\Sigma_i$, $i=1,2$, this becomes
\[ B_F(\theta_1:\theta_2) = \alpha\left(\operatorname{tr}(\Sigma_2\Sigma_1^{-1}) -\log\det(\Sigma_2\Sigma_1^{-1})-d
\right), \qquad \theta_i = -\alpha\Sigma_i^{-1}, \ i = 1,2. \]
Hence, $B_F (\theta_1:\theta_2)$ is also a function of the eigenvalues of $\Sigma_2^{-1}\Sigma_1$. 
Such a divergence is called a {\it spectral divergence}.

Thus the same group $\GL(d,\mathbb{R})$ acts on the statistical scale family and on its dual coordinate systems. 
The $f$-divergence invariance follows from the transformation model, while the dually flat divergence invariance follows from the cancellation between the dual actions. 
When the scale family is an exponential family and its Kullback--Leibler divergence is represented by this Bregman divergence, the two viewpoints coincide. 
\end{Exa}

\begin{Rem}[Relation to $V$-potential geometry]
The log-determinant potential used in Example~\ref{exa:dual-group-action} is closely related to the
$V$-potential geometry on the cone of positive definite matrices studied by Ohara and Eguchi \cite{ohara-eguchi, ohara-eguchi-2014}.  
In both settings, a potential depending only on the determinant of a positive definite matrix induces a dually flat structure, and the resulting geometry is compatible with congruence actions on positive definite matrices.
In particular, the choice $V(s) = -\alpha \log s$ corresponds to the log-determinant case appearing in Example \ref{exa:dual-group-action}.  
Thus, Example \ref{exa:dual-group-action} shares with the $V$-potential framework the same basic mechanism, specifically, a determinant-type potential on positive definite matrices and its compatibility with matrix congruence actions.
\end{Rem}

\section{Fisher--Rao distance}\label{sec:FRdist}

The Fisher--Rao distance is the Riemannian (geodesic) distance of the Fisher metric. 
Let $\{p_{\theta} (x) \colon \theta \in \Theta\}$ be a smooth statistical model on a measure space $(X, \mathcal{B}, \lambda)$. 
Assume that $\Theta$ is a smooth manifold and $\theta \mapsto p_{\theta}(x)$ is positive and smooth for every $x \in X$. 
Let $g^{F} = \left\{g^{F}_{ij} \right\}_{i,j}$ be the Fisher metric, specifically, 
\[ g^{F}_{ij}(\theta) = \int_X p_{\theta}(x) \, \frac{\partial}{\partial \theta_i} \log p_{\theta}(x)  \frac{\partial}{\partial \theta_j} \log p_{\theta}(x) \, \lambda(\dx), \ \theta = (\theta_i)_i \in \Theta.  \]
Then the Fisher--Rao distance $d_{\FR}(\theta, \theta^{\prime})$ is defined by 
\[ d_{\FR}(\theta, \theta^{\prime}) \coloneqq  \inf_{\gamma} \int_0^1 \sqrt{g^{F}_{\gamma(t)} (\dot{\gamma}(t), \dot{\gamma}(t))} \ dt,  \quad \theta, \theta^{\prime} \in \Theta,   \]
where the infimum is taken over all piecewise differentiable curves $\gamma \colon [0,1] \to \Theta$ satisfying $\gamma(0) = \theta$ and $\gamma(1) = \theta^{\prime}$.

We consider the framework of Example \ref{exa:naive}. 
We further assume that the function $f_1$ is in $C^2 ([0,\infty))$ and positive and Borel measurable. 
Then $f_0 \in C^2 (\mathbb{R}^d)$ and the  map $(\mu, \Sigma) \mapsto \log p\left(x | (\mu, \Sigma) \right)$ is in $C^2 \left(\mathbb{R}^d \times \SPD(d)\right)$ for every $x \in \mathbb{R}^d$. 

Let 
\[ a_1 \coloneqq  \int_{\mathbb{R}^d} \left( \frac{ f_1^{\prime} \left(\|x\|^2 \right)}{ f_1 \left(\|x\|^2 \right)}\right)^2 \|x\|^2 f_1 \left(\|x\|^2 \right) \, \ell(dx)  \]
and 
\[ a_2 \coloneqq  \int_{\mathbb{R}^d} \left( \frac{ f_1^{\prime} \left(\|x\|^2 \right)}{ f_1 \left(\|x\|^2 \right)}\right)^2 \|x\|^4 f_1 \left(\|x\|^2 \right) \, \ell(dx). \]
Assume that $a_1$ and $a_2$ are finite. 

The tangent space $T_{(\mu, \Sigma)} (\mathbb{R}^d \times \SPD(d))$ is isomorphic to $\mathbb{R}^d \times \Sym(d)$, where $\Sym(d)$ denotes the set of symmetric matrices of order $d$.  
We see that for $(u,U), (w,W) \in \mathbb{R}^d \times \Sym(d)$, 
\[ g_{(\mu, \Sigma)} \left( (u,U), (w,W)\right) \]
\[ = \frac{4a_1}{d} u^{\top} \Sigma^{-1} w + \frac{2 a_2}{d(d+2)} \textup{tr}(\Sigma^{-1}U\Sigma^{-1}W) + \left(\frac{a_2}{d(d+2)}  - \frac{1}{4} \right) \textup{tr}(\Sigma^{-1}U) \textup{tr}(\Sigma^{-1}W). \]

Denote the action of the affine group $\textup{Aff}(d)$ on $\Theta$ by 
\[ \Phi_{(A,b)}(\mu, \Sigma) \coloneqq  (A\mu + b, A \Sigma A^{\top}). \]
Then the derivative of the map $\Phi_{(A,b)}$ is a linear transformation on the tangent space $\mathbb{R}^d \times \Sym(d)$ and is  given by 
\[ d\Phi_{(A,b)} (u,U) = \left(Au, AUA^{\top} \right), \ (u,U) \in \mathbb{R}^d \times \Sym(d). \]
Therefore, 
\[ g_{\Phi_{(A,b)} (\mu, \Sigma)} \left(  d\Phi_{(A,b)}(u,U),  d\Phi_{(A,b)}(w,W)\right) = g_{(\mu, \Sigma)} \left( (u,U), (w,W)\right). \]

If $(A,b) = \left(\Sigma_2^{-1/2}, - \Sigma_2^{-1/2} \mu_2 \right)$, then \[ \Phi_{(A,b)}(\mu_1, \Sigma_1) = \left(\Sigma_2^{-1/2}(\mu_1 - \mu_2),  \Sigma_2^{-1/2} \Sigma_1 \Sigma_2^{-1/2} \right)\] 
and $\Phi_{(A,b)}(\mu_2, \Sigma_2) = (0, I_d)$. 

Thus, we have similar results to those in Example \ref{exa:naive}.  
Specifically, we have the following: 
\begin{Prop}
If the parameter space is $\mathbb{R}^d \times \SPD(d)$, then we have the following:\\
(i) 
\[ d_{\FR}\left((A\mu_1 + b, A\Sigma_1 A^{\top}), (A\mu_2 + b,  A\Sigma_2 A^{\top}) \right) = d_{\FR} \left((\mu_1, \Sigma_1), (\mu_2, \Sigma_2)\right). \]
(ii) 
$d_{\FR} \left((\mu_1, \Sigma_1), (\mu_2, \Sigma_2)\right)$ is a function of $\left(\Sigma_2^{-1/2}(\mu_1 - \mu_2), \Sigma_2^{-1/2}\Sigma_1^{1/2} \right)$. 
\end{Prop}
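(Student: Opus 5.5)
Part (i) reduces to the fact, shown just above, that each $\Phi_{(A,b)}$ is an isometry of the Fisher metric $g$ on $\Theta = \mathbb{R}^d \times \SPD(d)$. Let $\gamma \colon [0,1] \to \Theta$ be any piecewise differentiable curve from $(\mu_1,\Sigma_1)$ to $(\mu_2,\Sigma_2)$. Then $\Phi_{(A,b)} \circ \gamma$ is a piecewise differentiable curve from $\Phi_{(A,b)}(\mu_1,\Sigma_1)$ to $\Phi_{(A,b)}(\mu_2,\Sigma_2)$. Its velocity is $d\Phi_{(A,b)}(\dot\gamma(t))$, because $\Phi_{(A,b)}$ is affine in $(\mu,\Sigma)$. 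The invariance identity for $g$ then shows that the two curves have equal length, so taking the infimum gives
\[ d_{\FR}\left(\Phi_{(A,b)}(\mu_1,\Sigma_1), \Phi_{(A,b)}(\mu_2,\Sigma_2)\right) \le d_{\FR}\left((\mu_1,\Sigma_1),(\mu_2,\Sigma_2)\right). \]
Since $\Phi_{(A,b)}$ is bijective with inverse $\Phi_{(A,b)^{-1}} = \Phi_{(A^{-1},-A^{-1}b)}$, applying the same inequality to $(A,b)^{-1}$ gives the reverse inequality, and hence equality. The only points to confirm are that the composed curve stays in $\Theta$, which holds because congruence by $A$ preserves $\SPD(d)$, and that the endpoints map correctly.

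For part (ii), apply (i) with $(A,b) = \left(\Sigma_2^{-1/2}, -\Sigma_2^{-1/2}\mu_2\right)$. By the computation preceding the proposition, this gives
\[ d_{\FR}\left((\mu_1,\Sigma_1),(\mu_2,\Sigma_2)\right) = d_{\FR}\left(\left(\Sigma_2^{-1/2}(\mu_1-\mu_2), \Sigma_2^{-1/2}\Sigma_1\Sigma_2^{-1/2}\right), (0, I_d)\right). \]
The right-hand side is a function of $\nu \coloneqq \Sigma_2^{-1/2}(\mu_1-\mu_2)$ and $M \coloneqq \Sigma_2^{-1/2}\Sigma_1\Sigma_2^{-1/2}$. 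Writing $S \coloneqq \Sigma_2^{-1/2}\Sigma_1^{1/2}$, we have $M = S S^{\top}$, so $M$ is a function of $S$. Therefore $d_{\FR}$ is a function of $(\nu, S)$, which is exactly the pair in the statement. This parallels the reduction in Example~\ref{exa:naive}, with the diagonal-action invariance of Theorem~\ref{thm:inv-div} replaced by (i).

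The main obstacle is justifying the isometry property of the metric rather than the distance argument itself. Two things are needed. First, the displayed formula for $g_{(\mu,\Sigma)}$ must be invariant under $(u,U)\mapsto (Au, AUA^{\top})$ together with $\Sigma \mapsto A\Sigma A^{\top}$. This is a direct check: each of $u^{\top}\Sigma^{-1}w$, $\mathrm{tr}(\Sigma^{-1}U\Sigma^{-1}W)$ and $\mathrm{tr}(\Sigma^{-1}U)$ is unchanged by the cancellations $A^{-\top}A^{-1}$, $A^{\top}A^{-\top}$ and cyclicity of the trace. Second, if one wishes not to rely on the explicit formula, the invariance can be derived directly from \eqref{eq:inv-density}: the score satisfies
\[ \partial \log p(x \mid \Phi_{(A,b)}\theta)\left[d\Phi_{(A,b)}v\right] = \partial \log p\left((A,b)^{-1}x \mid \theta\right)[v], \]
and the substitution $x \mapsto Ax+b$, with multiplier $|\det A|$, then transports the Fisher integral. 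This is where the $C^2$ regularity and the finiteness of $a_1, a_2$ enter, since they justify differentiating under the integral sign. I would present the explicit trace check and mention the derivation from \eqref{eq:inv-density} as the conceptual reason.
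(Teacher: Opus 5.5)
Your proposal is correct and follows the paper's route. The paper shows that $d\Phi_{(A,b)}(u,U) = (Au, AUA^{\top})$ preserves the Fisher metric, and then uses $(A,b) = (\Sigma_2^{-1/2}, -\Sigma_2^{-1/2}\mu_2)$ to send $(\mu_2,\Sigma_2)$ to $(0,I_d)$; you additionally spell out two steps the paper leaves implicit, namely the curve-length and inverse-element argument that turns metric invariance into invariance of $d_{\FR}$, and the identity $\Sigma_2^{-1/2}\Sigma_1\Sigma_2^{-1/2} = SS^{\top}$.
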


Now we replace the parameter space with $\mathbb{R}^d \times \GL(d,\mathbb{R})$. 
The tangent space $T_{(\mu,V)}(\mathbb{R}^d \times \GL(d,\mathbb{R}))$ is $\mathbb{R}^d \times \textup{M}(d,\mathbb{R})$ where $\textup{M}(d,\mathbb{R})$ denotes the space of real matrices of size $d\times d$. 
Define a map $\tau \colon \mathbb{R}^d \times \GL(d,\mathbb{R}) \to \mathbb{R}^d \times \SPD(d,\mathbb{R})$ by $\tau(\mu, V) \coloneqq  (\mu, V V^{\top})$. 

The derivative of $\tau$ satisfies that 
\[ d\tau_{(\mu,V)}(u,U) = \left(u, U V^{\top} + V U^{\top}\right), \ (u,U) \in \mathbb{R}^d \times \textup{M}(d,\mathbb{R}). \]
Hence, $\Ker(d\tau_{(\mu,V)}) = \left\{(0,VK) \colon K + K^{\top} = 0 \right\}$. 
Therefore, this map is not an immersion if $d \ge 2$.

Denote the pull-back of the metric $g_{(\mu,\Sigma)}$ with respect to $\tau$ by $\bar{g}_{(\mu,V)}$. 
Denote the {\it symmetrization} of a square matrix $A$ by $\sym(A)$, specifically, 
$\sym(A) \coloneqq  \frac{1}{2} (A + A^{\top})$.
We see that 
\[ \bar{g}_{(\mu,V)}\left((u,U), (w,W) \right) \]
\[ = \frac{4a_1}{d} \tilde{u}^{\top} \tilde{w} + \frac{8 a_2}{d(d+2)} \textup{tr}\left(\widetilde{U} \widetilde{W}\right) + \left(\frac{4a_2}{d(d+2)}  - 1 \right) \textup{tr}\left(\widetilde{U}\right) \textup{tr}\left(\widetilde{W}\right), \]
where we let $\tilde{u} \coloneqq  V^{-1} u$, $\tilde{w} \coloneqq  V^{-1} w$, $\widetilde{U} \coloneqq  \sym(V^{-1}U)$, and $\widetilde{W} \coloneqq  \sym(V^{-1}W)$. 
We see that for every $K$ with $K + K^{\top} = O$, 
\[ \bar{g}_{(\mu,V)}\left((0,VK),\cdot \right) = 0. \]

Let $\Phi_{(A,b)}(\mu, V) \coloneqq  (A\mu + b, AV)$. 
Then 
\[ d\Phi_{(A,b)}(u,U) = (Au, AU), \  (u,U) \in \mathbb{R}^d \times \textup{M}(d,\mathbb{R}). \]
Hence,  
\[ \bar{g}_{\Phi_{(A,b)}(\mu,V)}\left( d\Phi_{(A,b)}(u,U), d\Phi_{(A,b)}(w,W)\right) = \bar{g}_{(\mu,V)}\left((u,U), (w,W) \right). \]

Finally, we replace the parameter space with $\mathbb{R}^d \times (\GL(d,\mathbb{R})/\O(d,\mathbb{R}))$. 
Define a map $\tilde\tau \colon \mathbb{R}^d \times (\GL(d,\mathbb{R})/\O(d,\mathbb{R})) \to \mathbb{R}^d \times \SPD(d)$ by $\tilde\tau(\mu, [V]) \coloneqq  (\mu, V V^{\top})$. 
Let $\tilde{g}$ be the pull-back of the metric $g$ by the map $\tilde{\tau}$. 
This metric does not degenerate since $\tilde\tau$ is a diffeomorphism.

Let $q \colon \mathbb{R}^d \times \GL(d, \mathbb R) \to \mathbb{R}^d \times (\GL(d,\mathbb{R})/\O(d,\mathbb{R}))$ be the quotient map, specifically, $q(\mu,V) \coloneqq  (\mu, [V])$. 
Let $dq$ be the differential of the map $q$.

We remark that $dq_{(\mu,V)}$ is surjective and $\Ker (dq_{(\mu,V)})$ is not trivial. 
Indeed, 
\[ \Ker (dq_{(\mu,V)}) = T_{(\mu,V)}\{(\mu,VQ) \colon Q \in \O(d,\mathbb{R}) \} = \{(0,VK) \colon K + K^{\top} = O\}.\]  
For ease of notation, let $M \coloneqq  \mathbb{R}^d \times \GL(d, \mathbb R)$ and $\widetilde{M} \coloneqq  \mathbb{R}^d \times (\GL(d,\mathbb{R})/\O(d,\mathbb{R}))$. 
Then 
\[ T_{(\mu,[V])} \widetilde{M} \simeq T_{(\mu,V)} M / \Ker (dq_{(\mu,V)}) \simeq \mathbb{R}^d \times \textup{M}(d,\mathbb{R})/\{(0,VK) \colon K + K^{\top} = O\}.\]  

It holds that 
\[ UV^{\top} + V U^{\top} = 2V\sym(V^{-1}U)V^{\top}.\]
By the linear map $(u,U) \mapsto (u, \sym(V^{-1}U))$, 
\[ \mathbb{R}^d \times \textup{M}(d,\mathbb{R})/\{(0,VK) \colon K + K^{\top} = O\} \simeq \mathbb{R}^d \times \Sym(d). \]

Now we identify $T_{(\mu,[V])} \widetilde{M}$ with $\mathbb{R}^d \times \Sym(d)$. 
By this identification $dq_{(\mu,V)}(u,U)$ corresponds to $(u,\sym(V^{-1}U))$ and we obtain that 
\[ \tilde{g}_{(\mu,[V])}\left(dq_{(\mu,V)}(u,U), dq_{(\mu,V)}(w,W) \right) \]
\[ = g_{(\mu, V V^{\top})} \left(d\tilde{\tau}_{(\mu,[V])} \circ dq_{(\mu,V)}(u,U), d\tilde{\tau}_{(\mu,[V])} \circ dq_{(\mu,V)}(w,W) \right) \]
\[ = g_{(\mu, V V^{\top})} \left(d\tau_{(\mu,V)}(u,U), d\tau_{(\mu,V)}(w,W) \right) \]
\[ = g_{(\mu, V V^{\top})} \left((u,UV^{\top}+VU^{\top}), (w,WV^{\top}+VW^{\top}) \right) \]
\[ = \frac{4a_1}{d} \tilde{u}^{\top} \tilde{w} + \frac{8 a_2}{d(d+2)} \textup{tr}\left(\widetilde{U} \widetilde{W}\right) + \left(\frac{4a_2}{d(d+2)}  - 1 \right) \textup{tr}\left(\widetilde{U}\right) \textup{tr}\left(\widetilde{W}\right). \]
Hence, the pull-back of the metric $\tilde{g}$ by the quotient map $q$ is identical to $\bar{g}$.

We remark that $\tilde{g}$ does {\it not} degenerate. 

The following is a commutative diagram:
\[
\begin{tikzcd}
{\mathbb{R}^d \times \GL(d,\mathbb{R})}
\arrow[d, "{q}"']
\arrow[dr, "{\tau}"] & \\
{\mathbb{R}^d \times (\GL(d,\mathbb{R})/O(d,\mathbb{R}))}
\arrow[r, "{\tilde\tau}"'] &
{\mathbb{R}^d \times \SPD(d)}
\end{tikzcd}
\]

We have 
\begin{Prop}\label{prop:FR-quotient}
If the parameter space is $\mathbb{R}^d \times (\GL(d, \mathbb{R})/\O(d, \mathbb{R}))$, then we have the following:\\
(i) 
\[ d_{\FR}\left((A\mu_1 + b, [AV_1]), (A\mu_2 + b,  [A V_2]) \right) = d_{\FR} \left((\mu_1, [V_1]), (\mu_2, [V_2])\right). \]
(ii) 
$d_{\FR} \left((\mu_1, [V_1]), (\mu_2, [V_2])\right)$ is a function of 
the singular values and their multiplicities of $V_2^{-1}V_1$ and the block norms  determined by the pair $(V_2^{-1}V_1, V_2^{-1}(\mu_1 - \mu_2))$.
\end{Prop}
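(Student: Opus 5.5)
We prove (i) by showing that each map $\widetilde\Phi_{(A,b)}(\mu,[V]) \coloneqq (A\mu+b,[AV])$ is an isometry of $(\widetilde M,\tilde g)$. We then deduce (ii) by composing (i) with the maximal invariant of Example~\ref{exa:modified} and Theorem~\ref{thm:double-coset}.

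For (i), we first note that $\widetilde\Phi_{(A,b)}$ is well defined, since $[AVP]=[AV]$ for $P\in\O(d,\mathbb{R})$. It is smooth, with inverse $\widetilde\Phi_{(A,b)^{-1}}$, and it satisfies $\widetilde\Phi_{(A,b)}\circ q = q\circ \Phi_{(A,b)}$ on $M$. Since $dq_{(\mu,V)}$ is surjective, every pair of tangent vectors at $(\mu,[V])$ can be written as $dq_{(\mu,V)}(u,U)$ and $dq_{(\mu,V)}(w,W)$. By the chain rule,
\[ \tilde g_{\widetilde\Phi(\mu,[V])}\bigl(d\widetilde\Phi\, dq(u,U), d\widetilde\Phi\, dq(w,W)\bigr) = \tilde g_{q(\Phi(\mu,V))}\bigl(dq\, d\Phi(u,U), dq\, d\Phi(w,W)\bigr) = \bar g_{\Phi(\mu,V)}\bigl(d\Phi(u,U), d\Phi(w,W)\bigr). \]
The last equality holds because $q^{*}\tilde g=\bar g$, as shown above. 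By the $\Phi$-invariance of $\bar g$ established above, this equals $\bar g_{(\mu,V)}((u,U),(w,W)) = \tilde g_{(\mu,[V])}(dq(u,U),dq(w,W))$. So $\widetilde\Phi_{(A,b)}$ is an isometry. It therefore maps piecewise differentiable curves joining $\theta,\theta'$ bijectively to such curves joining the image points, and it preserves their lengths. Taking infima gives (i).

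For a cleaner alternative route, we could instead observe that $\tilde\tau$ is an isometry from $(\widetilde M,\tilde g)$ onto $(\mathbb{R}^d\times\SPD(d), g)$ by definition, and that it intertwines $\widetilde\Phi_{(A,b)}$ with $\Phi_{(A,b)}(\mu,\Sigma)=(A\mu+b,A\Sigma A^\top)$. Then (i) follows from the invariance of $g$ under $\Phi_{(A,b)}$ shown earlier. I would mention this as a consistency check.

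For (ii), (i) says that $d_{\FR}$ is invariant under the diagonal action of $\textup{Aff}(d)$ on $\widetilde\Theta\times\widetilde\Theta$, where $\widetilde\Theta=\widetilde M$. The argument of Theorem~\ref{thm:abstract} uses only this invariance, not the specific form of $D_f$. Hence $d_{\FR}$ is constant on orbits and is a function of any maximal invariant. By Theorem~\ref{thm:double-coset} applied as in Example~\ref{exa:modified}, the double coset $H(V_2^{-1}V_1, V_2^{-1}(\mu_1-\mu_2))H$ with $H=\O(d,\mathbb{R})\times\{0\}$ is a maximal invariant.

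The remaining step, which I expect to be the only delicate point, is to check that this double coset is in bijection with the data consisting of the singular values $\tau_j$, their multiplicities $m_j$, and the block norms $r_j$. The SVD normal form $(\Sigma, U^\top\nu)$ and the description of the commutant $\{Q : Q\Sigma=\Sigma Q\}$ as block-diagonal orthogonal matrices show two things. First, two elements $(S,\nu)$ and $(S',\nu')$ lie in the same double coset exactly when they have the same $\Sigma$ and their vectors $z$ agree up to blockwise rotations $Q_j$. Second, $\O(m_j,\mathbb{R})$ acts transitively on spheres in $\mathbb{R}^{m_j}$, so $z$ is determined up to such rotations by the norms $r_j$. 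Hence $(\tau,m,r)$ is itself a maximal invariant, and $d_{\FR}$ is a function of it, which is (ii).
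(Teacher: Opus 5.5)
Your proposal is correct and follows essentially the paper's route: the paper sets up the $\Phi_{(A,b)}$-invariance of $\bar g$, the identity $q^{*}\tilde g=\bar g$ and the commutative diagram relating $q$, $\tau$ and $\tilde\tau$, which give (i), and it obtains (ii) by combining (i) with the double-coset/SVD maximal invariant of Example~\ref{exa:modified}. Your check that double cosets correspond bijectively to the data $(\tau_j,m_j,r_j)$, and your remark that only the invariance in Theorem~\ref{thm:abstract} is needed, spell out steps the paper leaves implicit.
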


\begin{Exa}\label{exa:FR-Gaussian}
For the location-scale model of the multivariate normal distribution, 
$$f_1 (r) = \frac{1}{(\sqrt{2\pi})^d} \exp\left(- \frac{r}{2} \right)$$ and hence $\displaystyle \frac{f_1^{\prime} (r)}{f_1 (r)} = -\frac{1}{2}$. 
Since the expectation and the variance of the chi-squared distribution of $d$ degree of freedom  are $d$ and $2d$, respectively, 
we see that $a_1 = \dfrac{d}{4}$ and $a_2 = \dfrac{d(d+2)}{4}$. 
Therefore, we obtain that 
\[ g_{(\mu, \Sigma)} \left( (u,U), (w,W)\right)  = u^{\top} \Sigma^{-1} w + \frac{1}{2} \textup{tr}(\Sigma^{-1}U\Sigma^{-1}W). \]
\end{Exa}

\begin{Rem}
The statement in Example \ref{exa:FR-Gaussian} should be understood as referring to the general two-point boundary-value problem rather than the geodesic initial-value problem. 
For the multivariate normal model,
explicit geodesic parametrizations are known, 
while a closed-form expression for the general Fisher--Rao distance remains unknown for  $d>1$~\cite{pinele2020fisher,nielsen2024approximation}. 

By Proposition \ref{prop:FR-quotient} (i), 
if $\nu \coloneqq V_2^{-1}(\mu_1-\mu_2)$ and $S\coloneqq V_2^{-1}V_1$,
then
\[ d_{\FR}\left((\mu_1,[V_1]),(\mu_2,[V_2])\right) = d_{\FR}\left((\nu,[S]),(0,[I_d])\right). \]
Via the diffeomorphism
$\tilde\tau(\mu,[V]) = \left(\mu,VV^\top \right)$,
this reduces the problem to the pair
$ \left((\nu,SS^\top),(0,I_{d})\right)$ in $\mathbb{R}^d \times \SPD(d)$ endowed with the Gaussian Fisher metric in
Example \ref{exa:FR-Gaussian}.

For this metric, Calvo and Oller \cite[Theorem 3.1 and Section 4]{calvo1991explicit} obtained an explicit representation of geodesics starting from $(0,I_d)$. 
Later, Kobayashi~\cite{kobayashi2023geodesics} described the geodesics between any two multivariate normal distributions.
\end{Rem}

\begin{Rem}
Lovri\'{c}, Min-Oo and Ruh \cite{lovric2000multivariate} proposed another invariant Riemannian geometry on the manifold of multivariate normal distributions. 
Their construction identifies the Gaussian location-scale manifold with the Riemannian symmetric space $\SL(d+1,\mathbb{R})/\SO(d+1,\mathbb{R})$. 

In the notation of this paper, for $\theta = (\mu,[V]) \in \mathbb{R}^d \times \GL(d,\mathbb{R})/\O(d,\mathbb{R})$, 
define 
\[ \iota (\mu,[V]) \coloneqq |\det V|^{-2/(d+1)} \begin{pmatrix} V V^{\top} + \mu \mu^{\top} & \mu \\ \mu^{\top} & 1 \end{pmatrix}. \]
The map $\iota$ is a diffeomorphism between $\mathbb{R}^d \times \GL(d,\mathbb{R})/\O(d,\mathbb{R})$ and $\SL(d+1,\mathbb{R})/\SO(d+1,\mathbb{R})$. 
Assume that $A \in \GL(d,\mathbb{R})$ and $\det A > 0$ and $b \in \mathbb{R}^d$. 
Let 
\[ j(A,b) \coloneqq (\det A)^{-1/(d+1)} \begin{pmatrix} A & b \\ 0 & 1 \end{pmatrix} \in \SL(d+1,\mathbb{R}). \]
Then 
\[ \iota(A\mu + b, [AV]) = j(A,b) \iota(\mu, [V]) j(A,b)^{\top}.\]
The map $j$ is the special-linear realization of the orientation-preserving affine group.
By the map $\iota$, 
the affine action on the Gaussian location-scale family becomes the restriction of the $\SL(d+1,\mathbb{R})$-congruence action on $\SL(d+1,\mathbb{R})/\SO(d+1,\mathbb{R})$. 

The induced metric on $\SL(d+1,\mathbb{R})/\SO(d+1,\mathbb{R})$ is not the Fisher metric; rather, it is a different
invariant metric for which the geodesic distance can be expressed explicitly in terms of the eigenvalues of the positive definite matrix representing the relative position of $\theta_1 = (\mu_1, [V_1])$ with respect to $\theta_2 = (\mu_2, [V_2])$ with determinant one. 
Hence, this construction should be viewed as a symmetric-space analogue of the affine-invariant Fisher--Rao geometry discussed above, not as a closed-form expression for the Fisher--Rao distance.
\end{Rem}

\bibliographystyle{plain}
\bibliography{maxinv-div-dist}

\end{document}